\def\ps@pprintTitle{%
\let\@oddhead\@empty
\let\@evenhead\@empty
\def\@oddfoot{}%
\let\@evenfoot\@oddfoot}
\newcommand{\R}{\mathds{R}}
\newcommand{\Z}{\mathds{Z}}
\newcommand{\lra}{\longrightarrow}
\newcommand{\cC}{\mathcal{C}}
\newcommand{\ba}{\bm{a}}
\newcommand{\bE}{\bm{E}}
\newcommand{\be}{\bm{e}}
\newcommand{\bff}{\bm{f}}
\newcommand{\bx}{\bm{x}}
\newcommand{\bbeta}{\bm{\beta}}
\newcommand{\dif}{\nabla_d}
\newcommand{\var}[1]{\mathbb{V}\left(#1\right)}
\newcommand{\mean}[1]{\mathbb{E}\left(#1\right)}
\theoremstyle{plain}
\newtheorem{thm}{Theorem}
\newtheorem{lem}[thm]{Lemma}
\newtheorem{cor}[thm]{Corollary}
\newtheorem{prop}[thm]{Proposition}
\newtheorem{defi}[thm]{Definition}
\theoremstyle{remark}
\newtheorem{rmk}[thm]{Remark}
\begin{document}

\begin{frontmatter}

    % R2 - 7
	\title{Linear minimum-variance approximants for noisy data}
	
	\author[UV1]{Sergio L\'opez-Ure\~na}
	\ead{sergio.lopez-urena@uv.es}
	% \cortext[cor]{Corresponding author}
	\author[UV1]{Dionisio F. Y\'a\~nez}
	\ead{dionisio.yanez@uv.es}
	%\journal{}
	\address[UV1]{Departament de Matem\`atiques.  Universitat de Val\`encia (EG) (Spain)}
	
	\begin{abstract}
        Inspired by recent developments in subdivision schemes founded on the Weighted Least Squares technique, we construct linear approximants for noisy data in which the weighting strategy minimizes the output variance, thereby establishing a direct correspondence with the Generalized Least Squares and the Minimum-Variance Formulas methodologies. By introducing annihilation-operators for polynomial spaces, we derive usable formulas that are optimal for general correlated non-uniform noise. We show that earlier subdivision rules are optimal for uncorrelated non-uniform noise and, finally, we present numerical evidence to confirm that, in the correlated case, the proposed approximants are better than those currently used in the subdivision literature.
	\end{abstract}
	
	\begin{keyword}
		Linear approximants \sep noisy data \sep variance minimization \sep correlated non-uniform noise.
	\end{keyword}
	
\end{frontmatter}

% \linenumbers

\section{Introduction}

In certain applications, noisy data are managed, requiring the use of specialized approximants. Many have been proposed in the literature, such as Moving Least Squares (MLS), Kernel Regression methods (see \cite{Levin98,TFM07}) or Generalized Least Squares (GLS) (see \cite{Aitken36}), for instance. %We can see in those works that a classical strategy to define the approximation consists in the minimization of certain error measure, while preserving a specified approximation order, arising to linear approximants when the error measure is the $L^2$ norm.
Our work is motivated by recent advances on subdivision theory on this problem. In particular, in \cite{LUYA} the Weigthed Least Squares method (WLS, that can be considered a particular case of GLS) was used to define new subdivision schemes. We continue with the idea followed by seminal works on Minimum-Variance Formulas (MVF) (v.g. \cite{CR76}), that propose to minimize the approximation variance subject to it is exact for polynomials up to a given degree. The MVF method was proven to be equivalent to the GLS one (see \cite{Aitken36}), thus, the present work can be considered a continuation of \cite{LUYA}.

% This work can be consider the continuation of earlier studies on subdivision schemes based on Weigthed Least Squares (WLS) \cite{LUYA}. In that work, we observed that the noise-reduction capability of subdivision rules depends strongly on the choice of weights. Here, thanks to the equivalence between GLS and MVF,

% $\Psi (P|{\bx}) = P(t_0)$ for every polynomial $P$ 

Let us introduce some notation and concepts: Let $\bff = (f_i)_{i=1}^{N} \in\R^N$ represent a dataset associated with the grid $\bx=(x_i)_{i=1}^{N} \in \R^N$, $x_i < x_{i+1}$. We assume the data has two components: % R3 - 11, R3 - 12
noise $\be\in\R^N$ and samples of a smooth function $G:\R \lra \R$. Specifically, $\bff = G|_{\bx}+\be$, where $G|_{\bx} := (G(x_i))_{i=1}^{N}$ and the noise $\be \sim \bE,$ being $\bE$ a vector of probability distributions with zero mean, i.e. $\mean{\bE} = 0\in\R^N.$ Our objective is to determine coefficients $\ba\in\R^N$ for a linear approximant, $\Psi(\bff) := \sum_{i=1}^{N} a_i f_{i}$, such that $\Psi(\bff)\approx G(t_0)$ for a given $t_0\in[x_1,x_N]$, achieving a specified accuracy while minimizing the influence of $\be$. This constrain is equivalent to demand that $\Psi (P|_{\bx}) = P(t_0)$, $\forall P\in\Pi_d$, where $d$ is a fixed polynomial degree, known as \emph{polynomial reproduction} property in subdivision theory (see \cite{Lopez24,dynlevinluzzato}).

% R3 - 16, R2 - 7
% This problem has already been addressed and its solution used in previous works. For instance, the Moving Least Squares method determines $\ba$ by solving a quadratic minimization problem among a polynomial space, while the Minimum Variance Approximate Formulas of the seminal work \cite{CR76} proposes the minimization of $\|\ba\|_2^2$ subject to $\Psi (P|_{\bx}) = P(t_0)$, for any polynomial $P$ up to a given degree. Later, we see the connection between our proposal and these approaches. In addition, we show that recently proposed subdivision rules \cite{LUYA} are optimal for certain noise distributions.

% Related approaches have a long history. For example, the MLS framework determines $\ba$ by fitting a polynomial to the data, but without statistical treatment. 

We assume that the covariance matrix of $\bE$ is not diagonal, in contrast to studies such as \cite{CR76}. In practical applications, it is often necessary to adopt this generalization, as the assumption of simple noise structures may not hold. To address such more general noise behavior, several approaches have been developed in the time-series literature, including the Autoregressive Integrated Moving Average (ARIMA) models \cite{Shumway25}, the AutoRegressive eXogenous (ARX) models \cite{Maurya22}, and the Generalized Least Squares (GLS) method \cite{Baltagi}. The covariance matrix can be either known, explicitly modeled -as is common in system identification across various engineering disciplines \cite{Maurya22}- or estimated, for instance, in econometrics using feasible Generalized Least Squares (GLS) \cite{Baltagi}. But ARIMA and ARX are suitable for forecasting, rather than interpolation (see \cite{Yu23}). GLS handles properly the interpolation problem by design, and can be used to fit a high degree polynomial if its parameters are properly set. % In the latter case, we show later that is equivalent to ours. In other words, here we present an alternative approach to define the GLS method.

% R3 - 10, R3 - 7
Our contributions are the following: First, we introduce and exploit annihilation operators (a usual concept in subdivision theory, see \cite{dynlevinluzzato,Lopez24}) to parametrize the affine space of coefficients that reproduce polynomials up to a prescribed degree; this algebraic parametrization yields to a different description of the fully general noise model allowing arbitrary correlation and non-uniform variance across samples (i.e. non-diagonal covariance matrix for $\bE$), leading to closed-form expressions for the minimum-variance coefficients. Second, we observe that minimum-variance coefficients are the only ones that, at the same time, allow polynomial reproduction, and they themselves are computed from evaluating a polynomial. Third, we show that the subdivision rules proposed in \cite{LUYA} and preceding works are optimal in the minimum-variance sense for uncorrelated noise. Essentially, this is due to the equivalence between MVF and GLS. In a nutshell, this work provides new perspectives and practical tools for designing subdivision-based approximants under realistic noise models.

% R3 - 16
In Section \ref{linearcase}, we begin by presenting the solution to the proposed problem and establishing its connection to previous research. Then, in Section \ref{numericalexp}, some experiments are performed to demonstrate the effectiveness of the proposed approximants in comparison to other ones. Finally, some conclusions and future work are presented in Section \ref{conclusions}.

\section{Designing linear approximants to minimize noise under prescribed approximation capability}\label{linearcase}

Since $\Psi$ is linear, we have that $ \Psi(G|_{\bx}+\be) = \Psi (G|_{\bx}) + \Psi (\be)$. We propose to split the problem into two parts. One takes care of approximating the smooth function, and the other tries to reduce the noise as much as possible.
On the one hand, we may solve the \emph{smooth problem}
$\Psi (G|_{\bx}) = G(t_0) + O(h^d)$
that is achieved by imposing on $\Psi$ the reproduction of polynomials up to the $(d-1)$-th degree, $\Pi_{d-1}$, as is well known. For this, we introduce the following definitions.
\begin{defi}
    Let $d,N\in\Z$ be such that $0\leq d < N$. Let $\bx\subset\R^N$ be a grid, and $t_0\in[x_1,x_N]$.
	An approximant $\Psi:\R^N\lra\R$ reproduces $\Pi_{d}$ with the setting $(\bx, t_0)$ if $\Psi (P|_{\bx}) = P(t_0), \,\, \forall P\in \Pi_{d}.$
\end{defi}
% R2 - 1 a continuación:
The invariance of the polynomial space concerning scaling and shifting operations allows us to infer that if $\Pi_{d}$ is reproduced with the setting $(\bx, t_0)$, then it is also reproduced with $(t_0 + h (\bx-t_0), t_0)$ for any $h > 0$. Notice that the stencil $\bx^h := t_0 + h (\bx-t_0)$ resembles $\bx$, as the distances from any stencil point to $t_0$ are uniformly altered by the same factor $h$:
\[
    x^h_i - t_0 = h(x_i-t_0), \qquad \forall i=1,\ldots, N.
\]
% R2 - 1 a continuación:
Using that $\Pi_d$ is invariant under affine scalings and translations, one can show that if a linear approximant reproduces $\Pi_{d}$ with the setting $(\bx, t_0)$, then it also reproduces $\Pi_{d}$ with the setting $(\bx^h, t_0)$, where $\bx^h := t_0 + h (\bx-t_0)$, for any $h > 0$. Notice that the affine transformation $x \mapsto t_0 + h (x-t_0)$ fixes $t_0$ and uniformly scales the grid spacings.

\begin{defi}
	An approximant $\Psi:\R^N\lra\R$ has an approximation order $d$ with the setting $(\bx, t_0)$, $t_0\in[x_1,x_N]$, if $\Psi (G|_{\bx^h}) = G(t_0) + O(h^d)$ is fulfilled for any function $G\in \mathcal{C}^d$. That is, if for each $G\in \mathcal{C}^d$ there exist $C_G,h_G>0$ such that $|\Psi (G|_{\bx^h}) - G(t_0)| \leq C_G h^d$ for all $h\in(0,h_G)$.
\end{defi}
\begin{lem}
	If $\Psi$ reproduces $\Pi_{d-1}$ with the setting $(\bx, t_0)$, then it has an approximation order $d$ with the setting $(\bx, t_0)$.
\end{lem}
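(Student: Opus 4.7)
The plan is to use the standard Taylor expansion argument. Fix $G \in \mathcal{C}^d$ and write $G = P + R$, where $P \in \Pi_{d-1}$ is the Taylor polynomial of $G$ of degree $d-1$ centered at $t_0$, and $R$ is the remainder. By Taylor's theorem with integral (or Lagrange) form of the remainder, for any $x$ in a compact neighborhood $K$ of $t_0$ we have $|R(x)| \leq \frac{M}{d!}|x-t_0|^d$, where $M = \sup_{y \in K}|G^{(d)}(y)|$ is finite because $G \in \mathcal{C}^d$.

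Next I would evaluate at the scaled grid. Since $x_i^h - t_0 = h(x_i - t_0)$, restricting $h$ to $(0, h_G)$ with $h_G$ small enough guarantees $\bx^h \subset K$. Then $|R(x_i^h)| \leq \frac{M}{d!} h^d |x_i - t_0|^d$. By linearity of $\Psi$ and the assumed polynomial reproduction,
\begin{equation}
\Psi(G|_{\bx^h}) = \Psi(P|_{\bx^h}) + \Psi(R|_{\bx^h}) = P(t_0) + \Psi(R|_{\bx^h}) = G(t_0) + \Psi(R|_{\bx^h}),
\end{equation}
using $P(t_0) = G(t_0)$. Here the key point is that $P \in \Pi_{d-1}$ and $\Psi$ reproduces $\Pi_{d-1}$ with the setting $(\bx^h, t_0)$, which follows from the scaling-invariance remark preceding the lemma.

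To finish, I would bound $|\Psi(R|_{\bx^h})| \leq \sum_{i=1}^N |a_i|\, |R(x_i^h)| \leq \frac{M}{d!} h^d \sum_{i=1}^N |a_i|\,|x_i - t_0|^d =: C_G h^d$, where $C_G$ depends on $G$ through $M$ but not on $h$. This yields the required inequality $|\Psi(G|_{\bx^h}) - G(t_0)| \leq C_G h^d$ for all $h \in (0, h_G)$.

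The only mild subtlety is the first step: one has to be careful to note that the invariance remark was proved for reproduction on $\bx^h$, which is precisely what allows us to apply reproduction to the Taylor polynomial after rescaling; otherwise no real obstacle appears, as the argument reduces to controlling the Taylor remainder on a compact set.
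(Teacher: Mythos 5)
Your proof is correct. The paper in fact states this lemma without proof (it appeals to the fact that polynomial reproduction implies approximation order ``as is well known''), and your Taylor-expansion argument --- splitting $G$ into its degree-$(d-1)$ Taylor polynomial at $t_0$ plus a remainder bounded by $\frac{M}{d!}|x-t_0|^d$, invoking the scale-invariance of reproduction on $\bx^h$, and bounding $|\Psi(R|_{\bx^h})|$ by $\frac{M}{d!}h^d\sum_i |a_i||x_i-t_0|^d$ --- is precisely the standard argument the authors are implicitly relying on, with the one genuine subtlety (that reproduction must hold on the rescaled grid $\bx^h$, not just on $\bx$) correctly identified and handled.
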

From now on, we omit $(\bx, t_0)$ if it is clear from the context.

% https://www.probabilitycourse.com/chapter6/6_1_2_sums_random_variables.php
On the other hand, for the \emph{noisy problem} that consists of minimizing the noisy contribution $\Psi (\be)$, we can perform an easy statistical treatment.

\begin{lem}
	Let $\Psi:\R^N\lra\R$ be an approximant defined as $\Psi(\bff) := \ba^T\bff=  \sum_{i=1}^{N} a_i f_{i}$ then
	the expectancy and variance of $\Psi(\bE)$ are
	\begin{equation} \label{eq:error_variance}
		\mean{\Psi(\bE)} = 0, \qquad \var{\Psi(\bE)} = \| \Omega \ba\|_2^2,
	\end{equation}
	where $\Omega$ is some upper triangular matrix.
\end{lem}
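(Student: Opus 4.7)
The plan is to treat the mean and variance claims separately, both as standard consequences of linear algebra applied to an affine functional of a random vector. For the expectation, I would write $\Psi(\bE) = \ba^T \bE$ and invoke linearity of $\mean{\cdot}$ to obtain $\mean{\Psi(\bE)} = \ba^T \mean{\bE} = 0$, using the hypothesis $\mean{\bE} = 0$ already stated in the introduction.

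For the variance, my first step is to recall the identity $\var{\ba^T \bE} = \ba^T \Sigma \ba$, where $\Sigma \in \R^{N \times N}$ denotes the covariance matrix of $\bE$. This identity follows by expanding $\var{\Psi(\bE)} = \mean{(\ba^T(\bE - \mean{\bE}))^2}$ and using bilinearity of the expectation on the resulting double sum. Once this quadratic form appears, I only need a factorization $\Sigma = \Omega^T \Omega$ with $\Omega$ upper triangular; substituting yields
\begin{equation}
\var{\Psi(\bE)} = \ba^T \Omega^T \Omega \ba = (\Omega\ba)^T(\Omega\ba) = \|\Omega \ba\|_2^2,
\end{equation}
which is the claimed formula.

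The existence of such an upper triangular $\Omega$ is the only genuinely non-trivial point. Since $\Sigma$ is symmetric positive semi-definite, in the strictly positive-definite case the classical Cholesky factorization provides a lower triangular $L$ with $\Sigma = L L^T$, and taking $\Omega := L^T$ gives the desired upper triangular factor. If $\Sigma$ is only semi-definite, I would either argue via a limit ($\Sigma + \varepsilon I$ is positive definite for $\varepsilon > 0$, and the Cholesky factor depends continuously on the entries, so a limit along $\varepsilon \to 0^+$ produces an upper triangular $\Omega$ satisfying $\Sigma = \Omega^T \Omega$, possibly with some zero diagonal entries), or invoke an $\text{LDL}^T$ factorization with $D \succeq 0$ and set $\Omega := D^{1/2} L^T$.

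The hardest part, in my view, is purely notational rather than conceptual: the statement advertises an upper triangular $\Omega$ without specifying any normalization (diagonal positivity, uniqueness, etc.), so one must be careful to phrase the conclusion as an existence claim and note that $\Omega$ need not be unique when $\Sigma$ is singular. Once that is acknowledged, the argument reduces to the standard Cholesky machinery and the proof is essentially immediate.
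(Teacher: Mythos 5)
Your proposal is correct and follows essentially the same route as the paper: linearity of expectation for the mean, the quadratic form $\ba^T\hat\Omega\ba$ for the variance, and a Cholesky-type factorization $\hat\Omega=\Omega^T\Omega$ with $\Omega$ upper triangular. Your extra care in the merely semi-definite case (limit or $LDL^T$) is a minor refinement of the paper's "Cholesky-like decomposition" remark, not a different argument.
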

\begin{proof}
	The probability distribution of $\Psi (\be)$ is $ \Psi(\bE) = \sum_{i=1}^{N} a_i E_{i} = \ba^T \bE$, denoting the vectors as column matrices. The expectancy and variance are
	$$ \mean{\Psi(\bE)} = \ba^T \mean{\bE} = 0, \qquad \var{\Psi(\bE)} = \ba^T \hat \Omega \ba,$$
	where
	% here $\ba$ is written as a column vector and
	$\hat \Omega$ is the covariance $N\times N$ matrix of the random distributions $\bE$,
	$$ \hat \Omega_{i_1,i_2} = \text{Cov}(E_{i_1},E_{i_2}), \qquad i_1,i_2 = 1,\ldots, N.$$
	Since $\hat \Omega$ is a symmetric and positive semidefinite matrix, it admits a Cholesky-like decomposition, say $ \hat \Omega = \Omega^T \Omega$,
	being $\Omega$ an upper triangular matrix which is invertible, provided that the covariance matrix $\hat \Omega$ is positive definite.
	Now we can write
	$ \var{\Psi(\bE)} = \ba^T \hat \Omega \ba = (\Omega \ba)^T (\Omega \ba) = \| \Omega \ba\|_2^2.$
\end{proof}

Our objective is to minimize the noise level. To this end, we try to reduce the variance of $\Psi (\be)$,  which is equivalent to minimizing  $\|\Omega \ba\|_2^2$.
Accordingly, we formulate the following optimization problem to simultaneously address both the accuracy and noise-related aspects:
\begin{equation} \label{eq:constrained_optimization}
	\begin{cases}
		\min \|\Omega \ba\|_2^2\\
		\text{s.t. $\ba\in\R^{N\times 1}$ such that $\Psi(\bff) = \ba^T \bff$ reproduces $\Pi_{d-1}$ with the setting $(\bx, t_0)$.}
	\end{cases}
\end{equation}

\begin{rmk}
	From now on, we assume that $\Omega$ is invertible. That is, the covariance matrix of the noise is positive-definite.
	Further simplifications can be made, which we do not consider. For instance, if all the variables $\bE$ are \emph{uncorrelated}, then $\Omega$ is just a diagonal matrix. In addition, if all the variables $\bE$ have equal variance, then $\Omega$ is a constant diagonal matrix and it could be assumed without loss of generality that it is the identity matrix. These assumptions are very common in practice, as in the numerical experiments of \cite{AY13,DHHS15,LUYA}.
\end{rmk}

\begin{defi}
	A linear approximant $\Psi$ is said to be \emph{minimum-variance} for a given definite-positive matrix $\hat \Omega$, given the real data $t_0\in\R,\bx\in\R^N$ and a given integer $d\geq 0$, if its coefficient vector $\ba \in \mathbb{R}^N$ is a solution to the constrained optimization problem given in equation \eqref{eq:constrained_optimization}.
\end{defi}

In the following, we compute the minimum-variance approximants for any arbitrary choice of $t_0,d,\bx$ and $\bE$, but supposing that the covariance matrix is positive definite, because it guarantees the unicity of the solution of \eqref{eq:constrained_optimization}.

\begin{lem} \label{lema:reproduction}
$\Psi$ reproduces $\Pi_{d-1}$ if and only if $\ba$ is a solution of the $d\times N$ linear system
	\begin{equation} \label{eq:reproduction_equations}
		\sum_{i=1}^{N} a_i x_i^s = t_0^s, \qquad s = 0,1,\ldots,d-1.
	\end{equation}
	The system can be solved if $N\geq d$, and the solution is unique if $N=d$.
	
	As a consequence, the set of approximants able to reproduce $\Pi_{d-1}$ is an affine space of dimension $N-d$.
\end{lem}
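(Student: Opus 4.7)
The plan is to reduce the polynomial reproduction condition to a finite linear system via linearity of $\Psi$, and then analyze that system using properties of the Vandermonde matrix.

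First, I would observe that $\Pi_{d-1}$ is spanned by the monomials $P_s(x) = x^s$ for $s=0,1,\ldots,d-1$. Since $\Psi$ is linear in its argument, the condition $\Psi(P|_{\bx}) = P(t_0)$ for all $P\in\Pi_{d-1}$ is equivalent to imposing it on this basis. Writing out $\Psi(P_s|_{\bx}) = \sum_{i=1}^{N} a_i x_i^s$ and $P_s(t_0) = t_0^s$ gives exactly the $d$ equations \eqref{eq:reproduction_equations}. This yields the "if and only if" part immediately.

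Next, I would identify the coefficient matrix of the system. Setting $V\in\R^{d\times N}$ with entries $V_{s+1,i} = x_i^s$, the system reads $V\ba = (1, t_0, t_0^2, \ldots, t_0^{d-1})^T$. Here $V$ is a (transposed) Vandermonde-type matrix. The key step is to show that $V$ has full row rank $d$ whenever $N\geq d$: indeed, selecting any $d$ columns indexed by distinct $x_{i_1} < \cdots < x_{i_d}$ gives a square Vandermonde submatrix which is invertible because the nodes $x_i$ are pairwise distinct (recall $x_i < x_{i+1}$). Hence the rows of $V$ are linearly independent, the system is consistent for any right-hand side, and when $N=d$ the matrix $V$ itself is square and invertible, yielding a unique solution.

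Finally, for the structure of the solution set, I would use the standard fact that if $\ba_0$ is any particular solution of $V\ba = \bt$, then the full solution set is $\ba_0 + \ker V$. Since $V$ has rank $d$, the rank-nullity theorem gives $\dim\ker V = N-d$, so the set of coefficient vectors reproducing $\Pi_{d-1}$ is an affine subspace of $\R^N$ of dimension $N-d$, as claimed.

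No step is really an obstacle here; the only thing to be careful about is that we need full \emph{row} rank of $V$ (not column rank), which is why the distinctness of the grid points $x_i$ is essential and why the regime $N\geq d$ is needed for existence rather than uniqueness.
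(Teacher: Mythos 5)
Your proof is correct and follows the same route as the paper: reduce reproduction to the monomial basis by linearity, recognize the coefficient matrix as a Vandermonde-type matrix of full row rank $d$ (using distinctness of the $x_i$), and conclude that the solution set is an affine space of dimension $N-d$. You simply supply more detail (the square Vandermonde submatrix argument and rank--nullity) than the paper's terse version.
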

\begin{proof}
	Equation \eqref{eq:reproduction_equations} is obtained from
$\Psi (P|_{\bx}) = P(t_0)$ for $P\in\{1,x,\ldots,x^{d-1}\}$, which is a basis for $\Pi_{d-1}$.
	
	Since the matrix of the system is a Vandermonde matrix, the system is compatible with $N-d$ degrees of freedom. Clearly, the solutions of a linear system form an affine space.
\end{proof}

\begin{rmk}
    Since the coefficients $\ba$ can be obtained by solving the above linear system, where $t_0$ and its powers only appear in the right part, then $\ba$ is $\cC^\infty$ as a function of $t_0$. Since the associated $\Psi$ is a linear combination with weights $\ba$, then $\Psi(\bff)$ as a function of \(t_0\) is $\cC^\infty$ too.
\end{rmk}

Now we see that this affine space can be easily parameterized. Let us denote $\Pi_d|_{\bx} = \{ P|_{\bx} \ : \ P\in\Pi_d\}$ which is a linear subspace of $\R^N$ of dimension $d+1$. Then, we can consider a full-rank linear operator $\nabla_d:\R^N \lra \R^{N-d}$ projecting any vector in $\R^N$ to the orthogonal complement of $\Pi_{d-1}|_{\bx}$, which is isomorphic to $\R^{N-d}$. By definition, $\nabla_d$ is such that
\begin{equation} \label{eq:annihilation}
	\Pi_{d-1}|_{\bx} = \ker(\dif).
\end{equation}
According to \cite{dynlevinluzzato,Lopez24}, $\nabla_d$ is an \emph{annihilation operator} of $\Pi_{d-1}$. We denote it by $\nabla_d$ because, for equal-spaced grids $\bx$, the finite difference operator of order $d$ can be taken. In that case, its matrix representation is a Toeplitz matrix whose non-zero entries are the combinatorial numbers $(-1)^i \binom{d}{i}$, $0\leq i \leq d$.

\begin{prop} \label{prop:affine_space} % R2 - 3
	The affine space of approximants reproducing $\Pi_{d-1}$ with the setting $(\bx, t_0)$ is
	\begin{equation} \label{eq:affine_space_matrix}
		\left \{ \ba^0 + \dif^T \bbeta : \bbeta\in\R^{N-d} \right \},
	\end{equation}
	where $\ba^0\in\R^N$ are the coefficients of some approximant reproducing $\Pi_{d-1}$ with the setting $(\bx, t_0)$.
\end{prop}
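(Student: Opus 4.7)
The plan is to combine Lemma \ref{lema:reproduction} (which characterizes reproduction as a rank-$d$ linear system) with the defining property \eqref{eq:annihilation} of the annihilation operator $\dif$, and then invoke the standard linear-algebraic identity $\operatorname{Im}(\dif^T) = \ker(\dif)^\perp$.

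First I would observe that, by Lemma \ref{lema:reproduction}, $\ba$ reproduces $\Pi_{d-1}$ if and only if it solves the linear system \eqref{eq:reproduction_equations}. Subtracting off the particular solution $\ba^0$, the set of all solutions is $\ba^0 + V$, where $V \subset \R^N$ is the solution space of the associated homogeneous system $\sum_{i=1}^N v_i x_i^s = 0$ for $s=0,\dots,d-1$. This homogeneous condition says exactly that $v$ is orthogonal (in the Euclidean inner product on $\R^N$) to each of $(x_i^s)_{i=1}^N$ for $s=0,\dots,d-1$, i.e.\ to a basis of $\Pi_{d-1}|_{\bx}$. Hence $V = \Pi_{d-1}|_{\bx}^{\perp}$.

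Next I would translate this orthogonal complement into the image of $\dif^T$. By \eqref{eq:annihilation}, $\ker(\dif) = \Pi_{d-1}|_{\bx}$, and by the fundamental relation between a linear map and its transpose, $\operatorname{Im}(\dif^T) = \ker(\dif)^\perp = \Pi_{d-1}|_{\bx}^\perp = V$. Therefore every element of $V$ can be written as $\dif^T \bbeta$ for some $\bbeta \in \R^{N-d}$, and conversely every such $\dif^T \bbeta$ lies in $V$. This yields the parametrization \eqref{eq:affine_space_matrix}.

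There is no real obstacle here: the only subtlety is making sure the parametrization is well posed, which follows because $\dif$ has full rank $N-d$, so $\dif^T$ is injective and the map $\bbeta \mapsto \ba^0 + \dif^T \bbeta$ is a bijection between $\R^{N-d}$ and the affine space of reproducing coefficients; in particular the dimensions match the $N-d$ degrees of freedom predicted by Lemma \ref{lema:reproduction}. A minor point worth flagging is that the existence of $\ba^0$ itself is guaranteed (since $N\ge d$ implies the Vandermonde system \eqref{eq:reproduction_equations} is compatible), so the description is non-vacuous.
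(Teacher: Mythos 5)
Your proof is correct and rests on the same ingredients as the paper's: Lemma \ref{lema:reproduction} for the structure of the solution set and the annihilation property \eqref{eq:annihilation} of $\dif$. The only difference is packaging — the paper argues by matching dimensions and verifying the single inclusion $(\ba^0+\dif^T\bbeta)^T P|_{\bx}=P(t_0)$ directly, whereas you invoke the identity $\operatorname{Im}(\dif^T)=\ker(\dif)^\perp$ to get the set equality at once; since that identity is itself proved by the same inclusion-plus-dimension-count, the two arguments are essentially identical.
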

\begin{proof}
	According to Lemma \ref{lema:reproduction}, this affine space has dimension $N-d$. Since \eqref{eq:affine_space_matrix} is also affine of such dimensionality, we only have to check that one is contained in the other. In particular, we check that any approximant $\ba^0 + \dif^T \bbeta$ reproduces $\Pi_{d-1}$:
$
	\left ( \ba^0 + \dif^T \bbeta \right )^T P|_{\bx} =
	(\ba^0)^T P|_{\bx} + \beta^T  \underbrace{\dif P|_{\bx}}_{0} =
	P(t_0).
$
\end{proof}

Using the parametrization \eqref{eq:affine_space_matrix}, the optimization problem \eqref{eq:constrained_optimization} can be formulated without constraints as in \eqref{eq:unconstrained_optimization}.
\begin{thm} \label{thm:existance_of_the_best}
    Suppose that $\hat \Omega$ is a definite positive and denote its Choleski decomposition by $\hat \Omega = \Omega^T \Omega$.
	The problem
	\begin{equation} \label{eq:unconstrained_optimization}
		\min \left \{ \|\Omega(\ba^0 + \dif^T \bbeta) \|_2^2 : \bbeta\in\R^{N-d} \right \}
	\end{equation}
	has a unique solution, $\bbeta^*$.
	The minimum-variance approximant is
$$ \ba^* = \ba^0 + \dif^T \bbeta^* = (I_{N\times N} - \dif^T (\dif \hat \Omega \dif^T)^{-1} \dif \hat \Omega ) \ba^0,$$
	being $I_{N\times N}$ the identity matrix.
\end{thm}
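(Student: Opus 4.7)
The plan is to recognize \eqref{eq:unconstrained_optimization} as a standard linear least-squares problem in the free variable $\bbeta$. Rewrite the objective as $\|M\bbeta - \by\|_2^2$ with $M := \Omega \dif^T \in \R^{N\times(N-d)}$ and $\by := -\Omega \ba^0 \in \R^N$. The first task is to verify that $M$ has full column rank $N-d$: by Proposition \ref{prop:affine_space} (or the construction of $\dif$) the operator $\dif$ has full row rank $N-d$, so $\dif^T$ has trivial kernel; since $\Omega$ is invertible (Cholesky factor of a positive-definite matrix), $\ker M = \ker \dif^T = \{0\}$. This immediately gives existence and uniqueness of the minimizer via the standard theory of overdetermined least squares: the normal equations $M^T M \bbeta = M^T \by$ admit the unique solution $\bbeta^* = (M^T M)^{-1} M^T \by$.

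Next I would translate this back to the problem's notation. Compute $M^T M = \dif \Omega^T \Omega \dif^T = \dif \hat \Omega \dif^T$ and $M^T \by = -\dif \Omega^T \Omega \ba^0 = -\dif \hat \Omega \ba^0$, so that
\begin{equation}
    \bbeta^* = -(\dif \hat \Omega \dif^T)^{-1} \dif \hat \Omega \ba^0.
\end{equation}
Substituting into $\ba^* = \ba^0 + \dif^T \bbeta^*$ yields exactly
\begin{equation}
    \ba^* = \bigl(I_{N\times N} - \dif^T (\dif \hat \Omega \dif^T)^{-1} \dif \hat \Omega \bigr)\ba^0,
\end{equation}
as claimed. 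The invertibility of $\dif \hat \Omega \dif^T$ that is used here is just the full column rank of $M$ rephrased: $\bbeta^T \dif \hat \Omega \dif^T \bbeta = \|\Omega \dif^T \bbeta\|_2^2 = \|M\bbeta\|_2^2$, which vanishes only for $\bbeta = 0$.

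The main (minor) obstacle is really just bookkeeping between $\hat \Omega$ and its factor $\Omega$, and making sure the full-rank argument is spelled out cleanly, because the uniqueness in the theorem statement depends on it. One could alternatively derive the result by setting the gradient of the quadratic $\|\Omega \ba^0 + \Omega \dif^T \bbeta\|_2^2$ with respect to $\bbeta$ to zero, which produces the same normal equations; either route is short. Since the objective is a strictly convex quadratic in $\bbeta$ (its Hessian $2\,\dif \hat \Omega \dif^T$ is positive definite by the argument above), the critical point is the unique global minimum, completing the proof.
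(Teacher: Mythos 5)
Your proposal is correct and follows essentially the same route as the paper: the paper also treats \eqref{eq:unconstrained_optimization} as a linear least-squares problem, invokes the Moore--Penrose inverse (i.e.\ the normal equations) to get $\bbeta^* = -(\dif \hat \Omega \dif^T)^{-1} \dif \hat \Omega \ba^0$, and justifies uniqueness by the full rank of $\dif$ together with the positive definiteness of $\hat\Omega$. You merely spell out the bookkeeping ($M^TM = \dif\hat\Omega\dif^T$, $M^T\by = -\dif\hat\Omega\ba^0$) and the rank argument more explicitly than the paper does, which is fine; the only thing the paper adds is the one-line remark, via Proposition \ref{prop:affine_space}, that solving \eqref{eq:unconstrained_optimization} is equivalent to the original constrained problem \eqref{eq:constrained_optimization}, which is what licenses calling $\ba^*$ the minimum-variance approximant.
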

\begin{proof}
	Due to Proposition \ref{prop:affine_space}, $\Psi$ reproduces $\Pi_{d-1}$ if and only if $\Psi$ belongs to \eqref{eq:affine_space_matrix}. Then \eqref{eq:constrained_optimization} is equivalent to \eqref{eq:unconstrained_optimization}. For solving the last one, the Moore–Penrose inverse can be used, which leads to the solution
$$ \bbeta^* = - (\dif \hat \Omega \dif^T)^{-1} \dif \hat \Omega \ba^0.$$
	Observe that the solution exists and is unique since $\dif$ is full-rank and $\hat \Omega$ is positive definite (thus, invertible).
\end{proof}

%In conclusion, the best choice of $a$, according to the established criteria, is
%$$ a^* = a^0 + \Phi_{r+1,N} \beta^* = (I_{N\times N} - \Phi_{r+1,N} (\Phi_{r+1,N}^T \Phi_{r+1,N})^{-1} \Phi_{r+1,N}^T) a^0,$$
%being $I_{N\times N}$ the identity matrix.

\begin{rmk}
	According to the last result, $\ba^*$ is unique. Thus, its expression is independent of the choice of $\ba^0$, but it depends on the noise distribution through $\hat \Omega$, the polynomial degree $d$, the grid $\bx$ and the evaluation point $t_0$.
\end{rmk}

An interesting property of $\ba^*$ is that its values are related to the point evaluations of some $\Pi_{d-1}$ polynomial, since
\begin{equation} \label{eq_hatOmega_a_in_Ker}
	\dif \hat \Omega \ba^* = (\dif \hat \Omega - \dif \hat \Omega \dif^T (\dif \hat \Omega \dif^T)^{-1} \dif \hat \Omega) \ba^0 = (\dif\hat \Omega - \dif\hat \Omega) \ba^0 = 0.
\end{equation}
%\end{proof}
Therefore, we can introduce the following result:
\begin{cor} \label{cor:rule_is_poly}
	There exists $Q\in\Pi_{d-1}$ such that $ \hat \Omega \ba^* = Q|_{\bx}$.
	If the noise is uniform and uncorrelated, so $\hat \Omega$ is a constant diagonal matrix, then it holds true that $ \ba^* = Q|_{\bx}$.
\end{cor}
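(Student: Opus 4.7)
The plan is to read off the conclusion directly from the identity already established in equation \eqref{eq_hatOmega_a_in_Ker} combined with the defining property of the annihilation operator in \eqref{eq:annihilation}. So the work is essentially interpretive rather than computational.

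First, I would note that equation \eqref{eq_hatOmega_a_in_Ker} gives $\dif(\hat\Omega \ba^*) = 0$, i.e., the vector $\hat\Omega \ba^*\in\R^N$ lies in $\ker(\dif)$. By \eqref{eq:annihilation}, $\ker(\dif) = \Pi_{d-1}|_{\bx}$, so there exists $Q\in\Pi_{d-1}$ such that $\hat\Omega \ba^* = Q|_{\bx}$. This establishes the first assertion.

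For the second statement, I would assume $\hat\Omega = c\, I_{N\times N}$ with $c>0$ (this is what ``constant diagonal'' means, and $c>0$ because $\hat\Omega$ is positive definite). Then $\hat\Omega \ba^* = c\,\ba^*$, so $\ba^* = (1/c)\,Q|_{\bx} = \tilde Q|_{\bx}$ where $\tilde Q := Q/c \in \Pi_{d-1}$ since $\Pi_{d-1}$ is closed under scalar multiplication. Relabeling $\tilde Q$ as $Q$ gives $\ba^* = Q|_{\bx}$ for some $Q\in\Pi_{d-1}$, as claimed.

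There is no real obstacle here: the content of the corollary is a direct rereading of \eqref{eq_hatOmega_a_in_Ker} in light of the annihilation property. The only thing worth being careful about is observing that positive definiteness of $\hat\Omega$ guarantees $c\neq 0$ in the second part, so the division by $c$ is legitimate and the resulting polynomial $Q/c$ genuinely lies in $\Pi_{d-1}$.
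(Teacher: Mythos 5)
Your proof is correct and follows the same route as the paper: read $\dif(\hat\Omega\ba^*)=0$ from \eqref{eq_hatOmega_a_in_Ker} and identify $\ker(\dif)$ with $\Pi_{d-1}|_{\bx}$ via \eqref{eq:annihilation}. You additionally spell out the constant-diagonal case (absorbing the factor $1/c$ into the polynomial), which the paper's proof leaves implicit, so your write-up is if anything slightly more complete.
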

\begin{proof}
	By \eqref{eq:annihilation} and \eqref{eq_hatOmega_a_in_Ker}, $\hat \Omega \ba^* \in \ker(\nabla_d) = \Pi_d|_{\bx}$. Then, there exists $Q\in\Pi_{d-1}$ such that $ \hat \Omega \ba^* = Q|_{\bx}$.
\end{proof}

We saw that if $\ba^*$ is minimum-variance, then $\hat \Omega \ba^* \in \ker(\nabla_d)$. The opposite is also true: if $\hat \Omega \ba \in \ker(\nabla_d)$ and $\ba$ reproduces $\Pi_{d-1}$, then $\ba$ is minimum-variance. We state this in the next result.

\begin{thm} \label{thm:caracterization_of_optimal}
	Let $\Psi$ be an approximant with coefficients $\ba$ that reproduces $\Pi_{d-1}$. It is minimum-variance if and only if $\exists Q\in\Pi_{d-1}$ such that $\ba = \hat \Omega^{-1} Q|_{\bx}$.
\end{thm}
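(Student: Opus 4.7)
My plan is to prove the two implications separately, using Theorem \ref{thm:existance_of_the_best} and Corollary \ref{cor:rule_is_poly} as the main tools. The forward direction is essentially free: if $\ba$ is minimum-variance, then $\ba = \ba^*$ and Corollary \ref{cor:rule_is_poly} already yields $\hat{\Omega}\ba = Q|_{\bx}$ for some $Q\in\Pi_{d-1}$, so $\ba = \hat{\Omega}^{-1} Q|_{\bx}$ after applying the invertibility of $\hat{\Omega}$.

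For the reverse direction, the strategy is to use the uniqueness of the minimum-variance solution guaranteed by Theorem \ref{thm:existance_of_the_best}. Assume $\ba$ reproduces $\Pi_{d-1}$ and satisfies $\ba = \hat{\Omega}^{-1} Q|_{\bx}$ for some $Q\in\Pi_{d-1}$. Let $\ba^*$ be the unique minimum-variance vector, which by Corollary \ref{cor:rule_is_poly} also satisfies $\hat{\Omega}\ba^* = Q^*|_{\bx}$ for some $Q^*\in\Pi_{d-1}$. The goal is to show $\ba = \ba^*$. Since both $\ba$ and $\ba^*$ are in the affine reproduction space described in Proposition \ref{prop:affine_space}, their difference lies in the linear part, so $\ba - \ba^* = \dif^T \bbeta$ for some $\bbeta\in\R^{N-d}$. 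On the other hand, $\hat{\Omega}(\ba - \ba^*) = (Q - Q^*)|_{\bx} \in \Pi_{d-1}|_{\bx} = \ker(\dif)$ by \eqref{eq:annihilation}. Applying $\dif$ to the identity $\hat{\Omega}(\ba - \ba^*) = \hat{\Omega}\dif^T\bbeta$ then gives $\dif\hat{\Omega}\dif^T \bbeta = 0$.

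The main step, which is really the only non-trivial one, is to conclude from $\dif\hat{\Omega}\dif^T \bbeta = 0$ that $\bbeta = 0$. This follows because $\dif\hat{\Omega}\dif^T$ is positive definite: $\dif$ has full row rank, so $\dif^T$ is injective, and $\hat{\Omega}$ is positive definite by assumption, so the quadratic form $\bbeta^T \dif\hat{\Omega}\dif^T \bbeta = (\dif^T\bbeta)^T \hat{\Omega} (\dif^T\bbeta)$ vanishes only at $\bbeta = 0$. This was already noted implicitly in the proof of Theorem \ref{thm:existance_of_the_best} when inverting $\dif\hat{\Omega}\dif^T$. Hence $\bbeta = 0$, giving $\ba = \ba^*$, and therefore $\ba$ is minimum-variance.
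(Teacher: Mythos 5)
Your proof is correct and follows essentially the same route as the paper: the forward implication is read off from Corollary \ref{cor:rule_is_poly}, and the reverse implication reduces to the invertibility (positive definiteness) of $\dif \hat\Omega \dif^T$. The only cosmetic difference is that you subtract the known optimum $\ba^*$ and show the homogeneous system $\dif\hat\Omega\dif^T\bbeta=0$ forces $\bbeta=0$, whereas the paper solves $\dif\hat\Omega(\ba^0+\dif^T\bbeta)=0$ directly and identifies the unique solution with $\bbeta^*$; both arguments hinge on exactly the same fact.
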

\begin{proof}
	If it is minimum-variance, Corollary \ref{cor:rule_is_poly} implies that $\hat \Omega \ba \in \ker(\nabla_d)$. Let us prove the opposite. Let $\bbeta$ be such that $\ba = \ba^0 + \dif^T \bbeta$. If $\hat \Omega \ba \in \ker(\nabla_d)$, then $\bbeta$ fulfills the linear system $\dif \hat \Omega ( \ba^0 + \dif^T \bbeta )=0.$ The system has a unique solution, since $\dif \hat \Omega \dif^T$ is invertible, as observed in the proof of Theorem \ref{thm:existance_of_the_best}. The solution is $\bbeta = - (\dif \hat \Omega \dif^T)^{-1} \dif \hat \Omega \ba^0 = \bbeta^*$. Then, $\ba = \ba^*$ is minimum-variance.
\end{proof}

This result suggests a form of characterization: consider an arbitrary approximant that reproduces $\Pi_{d-1}$. If there exists a positive definite matrix $\hat \Omega$ such that $\hat \Omega \ba$ corresponds to point evaluations of a polynomial of degree at most $d-1$, then the approximant is optimal for refining noisy data whose noise distribution has $\hat \Omega$ as covariance matrix. Note that the choice of $\hat \Omega$ is not unique, implying that an approximant $\ba$ can be minimum-variance for multiple noise covariance matrices. A simple example is that $\ba$ remains minimum-variance for $\lambda \hat \Omega$, for any $\lambda\in\R\setminus\{0\}$.

Finding the minimum-variance approximant as proposed in Theorem \ref{thm:existance_of_the_best} requires solving a $(N-d)\times (N-d)$ linear system whose matrix is $\dif \hat \Omega \dif^T$. But $N$ may be large in practice. An alternative approach is suggested by Theorem \ref{thm:caracterization_of_optimal}: Find the $d$ coefficients of some $Q\in\Pi_{d-1}$ such that $\Psi(\bff):= \sum_{i=1}^{N} (\hat \Omega^{-1} Q|_{\bx})_i f_i$ reproduces $\Pi_{d-1}$, which means that
\begin{equation} \label{eq:d_times_d}
    \sum_{i=1}^{N} (\hat \Omega^{-1} Q|_{\bx})_i x_i^s = t_0^s, \qquad s = 0,1,\ldots,d-1.
\end{equation}
This is a $d\times d$ linear system, and $d$ is usually small in practice.
A double track for computing the approximant, similar to this one, was also proposed in \cite{CR76}, but with a different description which does not use annihilation operators.

% R2 - 7
Inspired by Theorem 2 of \cite{CR76}, we proposed the next more explicit expression of the minimum-variance approximant.
\begin{thm}\label{thm:orthonormal}
    Consider the inner product $(P,Q) = P|_{\bx}^T \hat\Omega^{-1} Q|_{\bx}$ for the space $\Pi_{N-1}$.
    Let $\{P^s\}_{s=0}^{d-1}$ be a family of orthonormal polynomials such that $P^s\in\Pi_s$. The coefficients of the minimum-variance approximant for $\hat\Omega$, $t_0$, $\bx$ and $d-1$ are
    \[
        \ba = \hat\Omega^{-1} Q|_{\bx}, \quad Q:=\sum_{j=0}^{d-1} P^j(t_0) P^j.
    \]
\end{thm}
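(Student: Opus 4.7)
The plan is to reduce the statement to the characterization of minimum-variance approximants established in Theorem \ref{thm:caracterization_of_optimal}, which already asserts that $\ba$ is minimum-variance if and only if $\ba = \hat\Omega^{-1} Q|_{\bx}$ for some $Q\in\Pi_{d-1}$ with $\ba$ reproducing $\Pi_{d-1}$. Thus the whole task boils down to showing that the specific polynomial $Q := \sum_{j=0}^{d-1} P^j(t_0) P^j$ produces coefficients satisfying the reproduction equations \eqref{eq:d_times_d}.

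First I would reformulate the reproduction equations in terms of the inner product. The system \eqref{eq:d_times_d} says $\sum_i (\hat\Omega^{-1} Q|_{\bx})_i x_i^s = t_0^s$ for $s=0,\ldots,d-1$, which is exactly $(x^s, Q) = t_0^s$ in the bilinear form $(P,R) = P|_{\bx}^T \hat\Omega^{-1} R|_{\bx}$. Since $\{1,x,\ldots,x^{d-1}\}$ spans $\Pi_{d-1}$, by linearity this is equivalent to the single requirement
\begin{equation}
    (P,Q) = P(t_0), \qquad \forall P\in\Pi_{d-1}.
\end{equation}
A brief aside is needed to justify that $(\cdot,\cdot)$ is actually an inner product on $\Pi_{N-1}$: the evaluation map $P\mapsto P|_{\bx}$ is injective on $\Pi_{N-1}$ because $\bx$ consists of $N$ distinct nodes, and $\hat\Omega^{-1}$ is positive definite, so the form is positive definite. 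In particular a Gram–Schmidt process applied to $\{1,x,\ldots,x^{d-1}\}$ produces the claimed orthonormal family $\{P^s\}_{s=0}^{d-1}$ with $\deg P^s = s$, so the expression for $Q$ is well defined.

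Next I would verify the equivalent condition $(P,Q) = P(t_0)$ on the orthonormal basis of $\Pi_{d-1}$. Using orthonormality,
\begin{equation}
    (P^s, Q) = \sum_{j=0}^{d-1} P^j(t_0)\,(P^s,P^j) = P^s(t_0), \qquad s=0,\ldots,d-1,
\end{equation}
which by linearity extends to every $P\in\Pi_{d-1}$. Consequently $\ba = \hat\Omega^{-1} Q|_{\bx}$ satisfies the reproduction equations, and Theorem \ref{thm:caracterization_of_optimal} immediately yields that $\ba$ is minimum-variance.

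The only mildly delicate step I anticipate is the verification that the bilinear form is actually positive definite on $\Pi_{N-1}$, since this underpins both the existence of the orthonormal family and the uniqueness of its construction; once that is in place the rest is an almost mechanical exploitation of orthonormality together with the characterization theorem, so no real combinatorial or analytic obstacle is expected.
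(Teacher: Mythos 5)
Your proposal is correct and follows essentially the same route as the paper: invoke Theorem \ref{thm:caracterization_of_optimal} and then verify the reproduction property by testing against the orthonormal basis $\{P^s\}_{s=0}^{d-1}$, where orthonormality gives $(P^s,Q)=P^s(t_0)$ directly. Your added remark justifying that the bilinear form is genuinely an inner product (injectivity of $P\mapsto P|_{\bx}$ on $\Pi_{N-1}$ plus positive definiteness of $\hat\Omega^{-1}$) is a small rigor bonus the paper leaves implicit, but it does not change the argument.
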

\begin{proof}
    By applying Theorem \ref{thm:caracterization_of_optimal}, it remains to show that with such $Q$ the approximant certainly reproduces $\Pi_{d-1}$. That is, we may show that the approximant is exact for a basis of $\Pi_{d-1}$, as done in \eqref{eq:d_times_d}. In particular, for the basis $\{P^s\}_{s=0}^{d-1}$, we may check that
    \[
    \sum_{i=1}^{N} (\hat \Omega^{-1} Q|_{\bx})_i P^s(x_i) = P^s(t_0), \qquad s = 0,1,\ldots,d-1.
    \]
    Indeed, for $Q=\sum_{j=0}^{d-1} P^j(t_0) P^j$,
    \begin{align*}
        \sum_{i=1}^{N} (\hat \Omega^{-1} Q|_{\bx})_i P^s(x_i)
        = (Q|_{\bx})^T \hat \Omega^{-1} P^s|_{\bx}
        = \left(Q,P^s\right)
        = \sum_{j=0}^{d-1} P^j(t_0) \left(P^j,P^s\right)
        = P^s(t_0).
    \end{align*}
\end{proof}

According to \eqref{eq:error_variance}, a lower value of $\|\Omega \ba\|_2$ corresponds to a greater noise reduction. Given the approach adopted in this paper, which focuses on solving a minimization problem, the following statement can be easily proven.
\begin{thm} \label{thm:order}
	Let $d, d',N,N' \in \mathds{Z}$ be such that $0\leq d \leq d'$ and $1\leq N' \leq N$. Let $\bx\subset\R^N, \bx'\subset\R^{N'}$ be two grids such that $\bx' \subset \bx$. Suppose $t_0 \in [x_1, x_N]$, and let $\hat{\Omega} = \Omega^T \Omega$ represent the covariance matrix associated with the noise on the larger grid $\bx$. Denote $\ba^{d,\bx}$, $\ba^{d',\bx'}\in\R^N$ the minimum-variance approximants for their respective grids and polynomial degrees, where $\ba^{d',\bx'}$ is zero-padded at the points $\bx \setminus \bx'$.
	It follows that the approximant with coefficients $\ba^{d,\bx}$ achieves a better denoising performance compared to that with coefficients $\ba^{d',\bx'}$, satisfying the inequality
	\begin{equation} \label{eq:inequality}
		\|\Omega \ba^{d,\bx}\|_2 \leq \|\Omega \ba^{d',\bx'}\|_2.
	\end{equation}
\end{thm}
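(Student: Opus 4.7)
The plan is to reinterpret the inequality as a straightforward consequence of the fact that the minimum-variance approximant on the larger grid solves a less constrained optimization problem. I would observe that both $\ba^{d,\bx}$ and the zero-padded $\ba^{d',\bx'}$ live in $\R^N$, and that the value $\|\Omega\ba\|_2^2 = \ba^T \hat\Omega \ba$ is the objective of the same quadratic program defining $\ba^{d,\bx}$ in Theorem \ref{thm:existance_of_the_best}. So the proof reduces to showing that the zero-padded $\ba^{d',\bx'}$ is feasible for that program, since $\ba^{d,\bx}$ is its minimizer.

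First I would check that zero-padding preserves polynomial reproduction. Writing $\ba^{d',\bx'}$ with entries $a_i'$ indexed by $\bx'$ and extending by zeros to indices in $\bx\setminus\bx'$, one has for every $P\in\Pi_{d'-1}$
\begin{equation*}
\sum_{i : x_i\in\bx} (\ba^{d',\bx'})_i P(x_i) = \sum_{i : x_i\in\bx'} a_i' P(x_i) = P(t_0),
\end{equation*}
using that $\ba^{d',\bx'}$ reproduces $\Pi_{d'-1}$ with the setting $(\bx',t_0)$. Thus the zero-padded vector reproduces $\Pi_{d'-1}$ with the setting $(\bx,t_0)$. Because $d\leq d'$, we have $\Pi_{d-1}\subseteq\Pi_{d'-1}$, so it also reproduces $\Pi_{d-1}$ with the setting $(\bx,t_0)$, i.e. it is feasible for the program \eqref{eq:constrained_optimization} associated to $(d,\bx,\hat\Omega)$.

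Next I would invoke optimality: since $\ba^{d,\bx}$ minimizes $\|\Omega\ba\|_2^2$ over all vectors in $\R^N$ that reproduce $\Pi_{d-1}$ with $(\bx,t_0)$, evaluating the objective at the feasible point $\ba^{d',\bx'}$ yields the inequality \eqref{eq:inequality}. The identity $\ba^T\hat\Omega\ba=\|\Omega\ba\|_2^2$ from \eqref{eq:error_variance} makes the comparison consistent regardless of whether one views the quadratic form through $\hat\Omega$ or through its Cholesky factor $\Omega$.

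The only subtlety, and the one I expect readers to pause on, is the coherence between the covariance matrix used to define $\ba^{d',\bx'}$ on $\bx'$ and the larger $\hat\Omega$ used on $\bx$. The natural reading is that the covariance on $\bx'$ is the principal submatrix of $\hat\Omega$ indexed by $\bx'$; under this reading, the zero-padding ensures that $(\ba^{d',\bx'})^T\hat\Omega\,\ba^{d',\bx'}$ equals the variance computed with the submatrix, so the objective values agree and the argument closes cleanly. I would state this identification explicitly at the outset so that the feasibility/optimality argument reads transparently.
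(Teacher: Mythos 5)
Your argument is correct and is essentially the paper's own proof: the zero-padded $\ba^{d',\bx'}$ is feasible for the optimization problem \eqref{eq:constrained_optimization} defining $\ba^{d,\bx}$ (since reproducing $\Pi_{d'-1}$ on $\bx'$ implies reproducing $\Pi_{d-1}$ on $\bx$ after zero-padding), and optimality of $\ba^{d,\bx}$ gives the inequality. You simply spell out the feasibility check and the covariance-submatrix identification that the paper leaves implicit.
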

\begin{proof}
	Since $\ba^{d,\bx}$ solves the minimization problem in \eqref{eq:constrained_optimization}, and $\ba^{d',\bx'}$ satisfies the constraints defined in \eqref{eq:constrained_optimization}, the inequality in \eqref{eq:inequality} follows.
\end{proof}

\begin{cor}
	For any approximant (with coefficients $\ba$) reproducing $\Pi_0$, its denoising capability is lower bounded by $\|\Omega\ba\|_2^2 \geq (\mathbf{1}^T\hat\Omega^{-1}\mathbf{1})^{-\frac12}$,
	where $\mathbf{1} = (1,\ldots,1)^T \in\R^N$.
\end{cor}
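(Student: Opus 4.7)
The plan is to identify the corollary's bound with the value of the minimum variance in the special case $d=1$ (that is, reproduction of $\Pi_0$), and then invoke optimality to obtain the inequality for every other admissible $\ba$. Concretely, any $\ba\in\R^N$ reproducing $\Pi_0$ is a feasible point of the constrained optimization problem \eqref{eq:constrained_optimization} with $d=1$, so by the minimum-variance property
\[
\|\Omega\ba\|_2^2 \;\geq\; \|\Omega\ba^*\|_2^2,
\]
where $\ba^*$ denotes the minimum-variance approximant for $(\hat\Omega,t_0,\bx,d=1)$. Thus the whole task reduces to computing $\|\Omega\ba^*\|_2^2$ explicitly.

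For this, I would apply Theorem \ref{thm:caracterization_of_optimal} with $d=1$: the minimum-variance $\ba^*$ must satisfy $\ba^* = \hat\Omega^{-1} Q|_{\bx}$ for some $Q\in\Pi_0$, i.e.\ a constant $Q\equiv c$. Hence $\ba^* = c\,\hat\Omega^{-1}\mathbf{1}$, and it only remains to determine $c$ from the single reproduction equation of Lemma \ref{lema:reproduction} (with $s=0$), namely $\mathbf{1}^T\ba^* = 1$. This gives $c\,\mathbf{1}^T\hat\Omega^{-1}\mathbf{1}=1$, so $c = (\mathbf{1}^T\hat\Omega^{-1}\mathbf{1})^{-1}$, which is well defined because $\hat\Omega^{-1}$ is positive definite.

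Finally, using $\|\Omega\ba^*\|_2^2 = (\ba^*)^T\hat\Omega\ba^*$ together with the explicit form $\ba^* = c\,\hat\Omega^{-1}\mathbf{1}$, I would compute
\[
\|\Omega\ba^*\|_2^2 \;=\; c^2\,\mathbf{1}^T\hat\Omega^{-1}\hat\Omega\,\hat\Omega^{-1}\mathbf{1} \;=\; c^2\,\mathbf{1}^T\hat\Omega^{-1}\mathbf{1} \;=\; c \;=\; (\mathbf{1}^T\hat\Omega^{-1}\mathbf{1})^{-1},
\]
and combine this with the minimality inequality from the first paragraph to obtain the stated lower bound. There is no real obstacle: once Theorem \ref{thm:caracterization_of_optimal} is invoked, the proof collapses to the one-parameter calculation above; the only mild subtlety is bookkeeping the identification $d-1=0\Leftrightarrow d=1$ and using that $\Pi_0$ consists of constants so that $Q|_{\bx}$ becomes a multiple of $\mathbf{1}$.
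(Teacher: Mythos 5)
Your proof is correct and follows essentially the same route as the paper's: reduce to the minimum-variance approximant for $d=1$ via feasibility in \eqref{eq:constrained_optimization}, use Theorem \ref{thm:caracterization_of_optimal} to write $\ba^* = c\,\hat\Omega^{-1}\mathbf{1}$, fix $c$ from the reproduction condition $\mathbf{1}^T\ba^* = 1$, and evaluate $\|\Omega\ba^*\|_2^2 = c$. One remark: what you (and the paper's own proof) actually establish is $\|\Omega\ba\|_2^2 \geq (\mathbf{1}^T\hat\Omega^{-1}\mathbf{1})^{-1}$, equivalently $\|\Omega\ba\|_2 \geq (\mathbf{1}^T\hat\Omega^{-1}\mathbf{1})^{-1/2}$, so the exponent $-\frac12$ attached to the \emph{squared} norm in the statement is a typo in the paper rather than a gap in your argument.
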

\begin{proof}
	As a consequence of Theorem \ref{thm:order}, the best denoising capability is obtained for $d=0$. By Theorem \ref{thm:caracterization_of_optimal}, $\ba^{0,\bx} = \hat\Omega^{-1}(c \mathbf{1})$ for some $c\in\R$. By the reproduction of $\Pi_0$, $\mathbf{1}^T\ba^{0,\bx} = 1$, which implies that $c=(\mathbf{1}^T\hat\Omega^{-1}\mathbf{1})^{-1}$. Then $\|\Omega\ba^{0,\bx}\|^2_2 = (\ba^{0,\bx})^T\hat\Omega\ba^{0,\bx} = (\ba^{0,\bx})^T (c \mathbf{1}) = c$.
\end{proof}

\subsection{Connection with previous research}\label{resprevios}

% R2 - 2:
The linear approximants proposed above can be used to define subdivision schemes. More precisely, given two integers $n,d\geq 0$ and a sequence of bi-infinite univariate grids $(\bx^k)_{k=0}^\infty$, $\bx^k = (x^k_i)_{i\in\Z}$ (for instance, $\bx^k = 2^{-k}\Z$), let us denote $\bx^k_{i-n+1:i+n} := (x^k_{i-n+1},\dots,x^k_{i+n})$ and $\Psi_{(\bx^k_{i-n+1:i+n},t_0)}:\R^N\to \R$, with $N:=2n$, the minimum-variance approximant for the setting $(\bx^k_{i-n+1:i+n},t_0)$ and some given positive-definite matrix $\hat \Omega^{k,i}\in\R^{N\times N}$.
A binary subdivision scheme consists of the iterative application of the sequence of subdivision operators $S^k:\ell(\Z)\to\ell(\Z)$, defined using the proposed approximants as \emph{refinement rules}:
\begin{equation} \label{eq:scheme_definition}
    (S^k (\bff))_{2i} := \Psi_{(\bx^k_{i-n+1:i+n},x^{k+1}_{2i})}(f_{i-n+1},\ldots,f_{i+n}),\quad    (S^k (\bff))_{2i+1} := \Psi_{(\bx^k_{i-n+1:i+n},x^{k+1}_{2i+1})}(f_{i-n+1},\ldots,f_{i+n}), \qquad \forall i\in\Z,
\end{equation}
where $\bff = (f_i)_{i\in\Z}$.

We are going to demonstrate that our approximants extend those subdivision rules presented in \cite{DHHS15} and, more broadly, in \cite{AY13,LUYA}. The subdivision rules in these studies are derived by solving a weighted regression problem using $\Pi_{d-1}$ polynomials, which, by construction, reproduce $\Pi_{d-1}$. According to Theorem 3.4 in \cite{LUYA}, the coefficients of these rules are expressed as $\ba = W Q|{\bx}$, where $Q \in \Pi_{d-1}$, $\bx$ represents a centered equispaced grid, and $W$ is a positive diagonal matrix. As established in Theorem \ref{thm:caracterization_of_optimal}, these rules are optimal to refine noisy data with noise characterized by the covariance matrix $\hat \Omega = W^{-1}$.

The diagonal structure of $W^{-1}$ implies that the authors in \cite{ AY13, DHHS15,LUYA} (unintentionally) focused on the optimal subdivision rules for non-uniform, uncorrelated noise. By contrast, the rules we propose here generalize this framework to optimally handle correlated noise.

In \cite{LUYA}, it is shown that the choice of $W$ significantly affects the approximation capability of the subdivision scheme. Moreover, the subdivision scheme \eqref{eq:scheme_definition} defined with $\bx^k = 2^{-k}\Z$ was proven to be convergent for $d = 0, 1$ for any positive diagonal matrix $W$, and for $d = 2, 3$ under specific choices of $W$.

Similarly, comparing with the MLS method, we observe that it also fits our description. The MLS coefficients can be obtained by minimizing \eqref{eq:constrained_optimization} with $\hat\Omega$ a diagonal matrix (in the usual description of MLS, as in \cite{Levin98}) or positive definite $\hat\Omega$ (in the more general case, as in \cite{BS89}). That is, not only MLS is Backus-Gilbert optimal, but also it is minimum-variance for a certain noise distribution.

The approximation of linear functionals ($L(f) = f(t_0)$, in our study) by using linear approximants, such that minimize the variance related to the uncertainty in the data, is a classic topic studied, for example, in \cite{CR76}. In that paper a general linear functional $L$ is considered, but the uncertainty in the data was assumed to be independent and identically distributed, in contrast to ours. Firstly, the authors introduced different linear systems to compute the approximant, similar to our proposals in Theorem \ref{thm:existance_of_the_best} (equivalent to solving a $(N-d)\times(N-d)$ system) and in Equation \eqref{eq:d_times_d} (a $d\times d$ system). The novelty in our work relies on the use of annihilation operators and polynomial expression of the coefficients to derive these systems, as well extending the more general case where noise has a non-diagonal covariance matrix. A worth-mentioning result in \cite{CR76} is a clean expression of the coefficients in terms of $L$ and a family of orthonormal polynomials $\{P^i\}_{i=0}^d$, $P^i\in\Pi_i$:
\[
    a_i = \sum_{s=0}^d L(P^s) P^s(x_i), \qquad i=1,\ldots,N,
\]
which in our case for $\hat\Omega=I_{N\times N}$ is
\(
    a_i^{d,\bx} = \sum_{s=0}^d P^s(t_0) P^s(x_i).
\)
We have successfully generalized this result to a positive definite $\hat\Omega$ in Theorem \ref{thm:orthonormal}, by considering a different inner product from the one considered in \cite{CR76} and using the characterization given in Theorem \ref{thm:caracterization_of_optimal}.

\section{Numerical experiments}\label{numericalexp}

We remark that an extensive experimental validation of minimum-variance approximants is not the aim here because their equivalence to generalized least squares (GLS) is well known, and the topic has been thoroughly studied in the statistical literature. The numerical experiments included in the following are, therefore, intended for completeness and illustration.

In this section, we show the advantages of employing minimum-variance approximants for refining noisy data in three experiments: In the first, we consider uncorrelated noise with varying variances. In the second, we analyze correlated noise with equal variances. The key distinction between these experiments lies in the covariance matrix, denoted $\hat\Omega$. In both cases, we compare the new approximants with the minimum-variance approximant for uniform uncorrelated noise, characterized by coefficients $\ba^0 = (a^0_i)_{i=1}^N$, $a^0_i = \frac1N$, which was studied in \cite{DHHS15,LUYA}. In the final experiment, we provide a visual comparison of these approximants on noisy star-shaped data.

For the  first two experiments, $\bx = (-7,\ldots,7,8)\in\R^{16}$. We analyze the approximants derived for $t_0 = 0,1/4,1/2$ (commonly used as binary subdivision rules) and the polynomial degrees $d' := d-1 = 0,1,2,3$. Let $\ba^{t,d,\hat \Omega}$ denote the minimum-variance approximant for given parameters $t_0,d,\hat \Omega$. To evaluate performance, we use the ratio
% R2 - 5
\begin{equation} \label{eq:rho}
    \rho^{t_0,d,\hat \Omega}:=\|\Omega \ba^{t_0,d,\hat \Omega}\|_2^2 / \|\Omega \ba^0\|_2^2 \leq 1,
\end{equation}
which quantifies the variance reduction achieved by the minimum-variance approximant compared to $\ba^0$. The results presented in Figure \ref{fig_expermient} demonstrate that the same conclusions hold across all tested values of $t_0$ and $d$. Specifically, the minimum-variance approximant consistently outperforms $\ba^0$, and the performance ratio $\rho^{t_0,d,\hat \Omega}$ can be arbitrarily small.

\begin{paragraph}{First Experiment: Uncorrelated Noise with Varying Variances}
	Given $\epsilon\in(0,1)$, let $\hat\Omega^\epsilon\in\R^{16\times16}$ be a diagonal matrix with entries $\hat\Omega^\epsilon_{i,i} = \epsilon$ for $x_i \leq 0$ and $\hat\Omega^\epsilon_{i,i} = 1$ otherwise. Here, the noise is uncorrelated ($\hat\Omega^\epsilon$ is diagonal), and the noise variance is almost zero in the first half of the data. Figure \ref{fig_expermient}-left illustrates that $\lim_{\epsilon\to0} \rho^{t_0,d,\hat \Omega^\epsilon} = 0$, showing that the minimum-variance approximant leverages the data regions with negligible noise to achieve better performance.
    This highlights the effectiveness of the minimum-variance approximant in scenarios where the noise levels vary significantly across the data.
\end{paragraph}

\begin{paragraph}{Second Experiment: Correlated Noise with Uniform Variance}
	For $\epsilon\in(0,0.1)$, let $\hat\Omega^\epsilon\in\R^{16\times16}$ be a block-diagonal matrix with blocks of the form
$$\begin{pmatrix} 1 & -1+\epsilon & 0 & 0 \\ -1+\epsilon & 1 & -\epsilon & 0 \\ 0 & -\epsilon & 1 & -\epsilon \\ 0 & 0 & -\epsilon & 1 \end{pmatrix}.$$
	In this case, the noise variance equals 1 across all data points, but there are correlations within the grid. As shown in Figure \ref{fig_expermient}-right, $\lim_{\epsilon\to0} \rho^{t_0,d,\hat \Omega^\epsilon} = 0$. This result demonstrates that the minimum-variance approximant effectively exploits noise correlation to enhance denoising performance.
\end{paragraph}

\begin{figure}[!h]
	\centering
	\includegraphics[width=0.45\textwidth,clip]{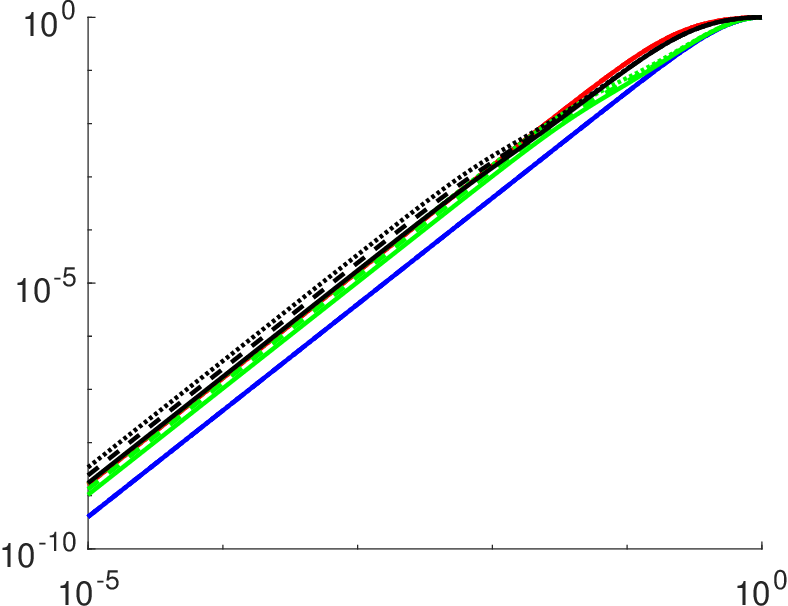}
	\hspace*{20pt}
	\includegraphics[width=0.45\textwidth,clip]{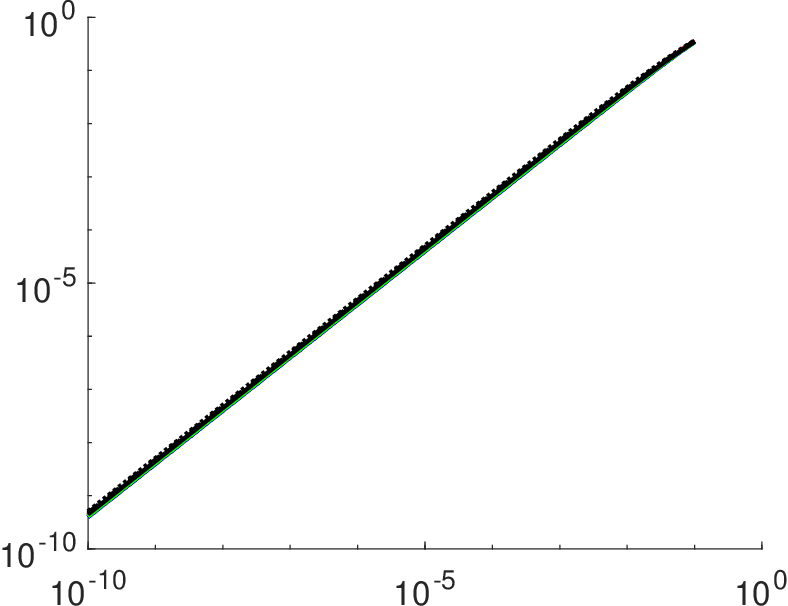}
	\caption{\label{fig_expermient} % R2 - 5
	Graph of $\rho$, defined in \eqref{eq:rho}, as a function of $\epsilon$ for the first experiment (left) and the second experiment (right). Several values of $d$ and $t_0$ are considered. Different line styles are used for $t_0 = 0,1/4,1/2$ (continuous, dashed, and dotted, respectively) and different colors for $d = 0,1,2,3$ (blue, red, green, black, respectively).}
\end{figure}

\begin{paragraph}{Third Experiment: Noisy Star-Shaped Curve}
	Similarly to \cite{DHHS15,LUYA}, we sample $F(t) = (4\cos(t)+\cos(4t), 4\sin(t)-\sin(4t))$, at $t_i = \frac{2\pi i}{320}$, $i=0,\ldots,319$. Noise is added to the two coordinates, with a covariance matrix that is block diagonal. Each block is given by $\frac12 \hat \Omega^{10^{-10}}$ of either the first or the second experiment. We consider a moving evaluation point $t_i$ and stencil $\bx = (t_{i+j})_{j=-7}^8$, taking into account the periodicity of the data, to compute and apply the minimum-variance approximant (as well as $\ba^0$, for comparison) and $d'=1$. The results are shown in Figure \ref{fig_expermient2}, where the minimum-variance approximant outperforms $\ba^0$ in both cases.
\end{paragraph}

\begin{figure}[!h]
	\centering
	\includegraphics[width=0.45\textwidth,clip]{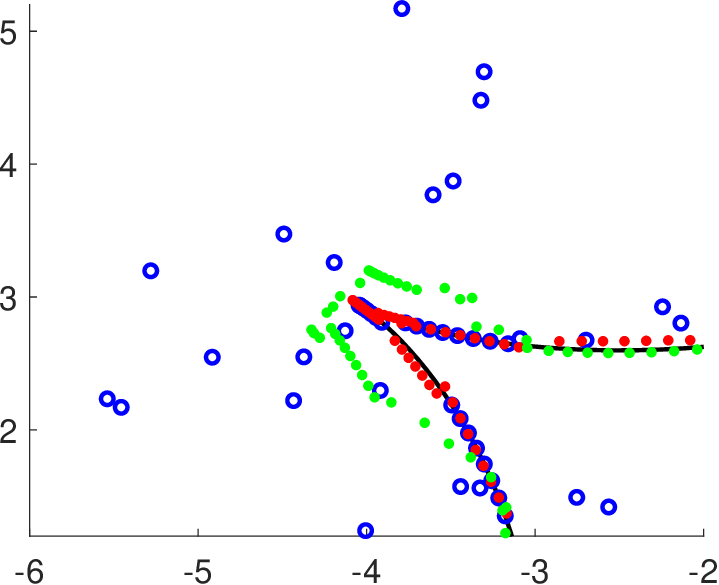}
	\hspace{40pt}
	\includegraphics[width=0.45\textwidth,clip]{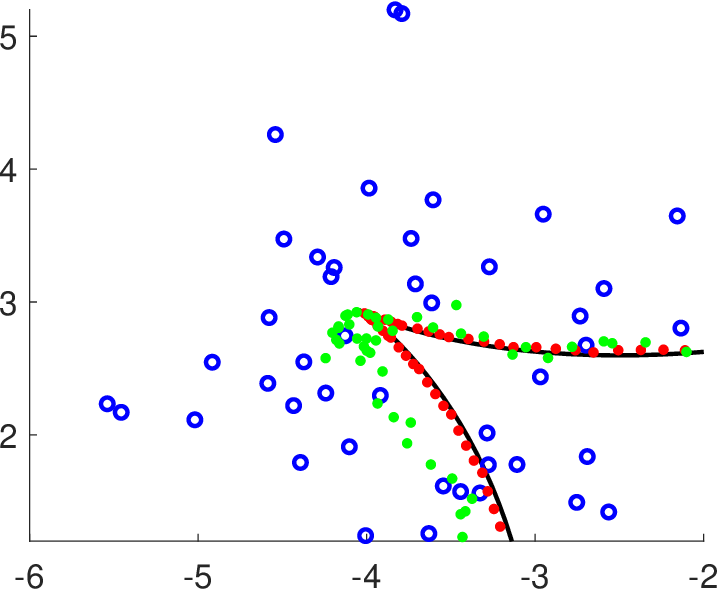}
	\caption{\label{fig_expermient2}
    Third experiment results.
	A star-shaped curve (black line) is sampled and subsequently corrupted with noise (blue circles). The minimum-variance approximants and the one proposed in \cite{DHHS15} are independently applied to refine noisy data (red and green dots, respectively). The left and right plots correspond to the first and second experiments, respectively. A zoom of an interest region is shown.
	}
\end{figure}

% R2 - 6
\bigskip
The results presented may not be surprising to the reader, since the noise used in the data follows particular distributions. What we aim to show is that constructing approximants as proposed, by minimizing variance, effectively exploits the information provided by the covariance matrix and generates accurate estimates, a much better typical approximation than the more classical ones, which now we know are minimum-variance for uniform uncorrelated noise.

\section{Conclusions and future work}\label{conclusions}

We derived linear approximants that are optimal in the minimum-variance sense. As a novelty, we expressed them in terms of annihilation operators and polynomial evaluations. We contextualized this technique in recent advances in subdivision schemes.

The experiments presented illustrate the effectiveness of linear approximants optimized for noisy data with a known covariance matrix in academic experiments. Such approximants seem particularly advantageous in scenarios where noise exhibits spatially varying magnitudes or significant correlations. In fact, the performance of a minimum-variance approximant can be arbitrarily superior to that of the approximant designed for uniform uncorrelated noise.

We showed in Section \ref{resprevios} how the proposed approximants can be used to define new subdivision schemes. Future work will focus on studying these kinds of subdivision schemes and their performance in denoising when combined with multi-resolution analysis. Since our proposal coincides with GLS, all the advances done in the literature for feasible GLS in the design of new subdivision schemes, also, for the case when the covariance matrix is unknown.

% These approximants can be used as long as the noise covariance matrix is known. In practice, it may be unavailable and methods for estimating the covariance matrix from the data can be explored. For instance, in reverse-engineering style, applying the minimum-variance approximant on noisy data and calibrate the covariance matrix to minimize the approximation error can be considered as a covariance matrix estimation method, which may lead to an interesting link between Approximation Theory and Statistics.

Finally, this work focused on the approximation of the linear functional $L(f)=f(t_0)$, since our intention is to support recent advances in subdivision theory. Nevertheless, we believe that the theory developed here can be easily adapted to a general $L$.

\vspace{-0.25cm}
\section*{Data availability and AI-assistance in the writing process}
\vspace{-0.15cm}

The MATLAB codes used to generate the numerical results presented in this paper are publicly available on GitHub: \url{https://github.com/serlou/minimum-variance-approximants}. These codes enable full replication of all the numerical experiments discussed.
During the preparation of this work, the authors utilized ChatGPT to enhance the clarity and readability of the text. Following this, the authors carefully reviewed and edited the content to ensure accuracy and take full responsibility for the final version of the publication.

\vspace{-0.25cm}
\section*{Acknowledgments and competing interests}
\vspace{-0.15cm}

This research has been supported by the grant PID2023-146836NB-I00 (funded by MCIU/AEI/ 10.13039/501100011033). The authors thank the referees involved in the revision of this work for their insightful and thorough suggestions. The authors declare no conflict of interest.
\vspace{-0.25cm}

\bibliographystyle{plain}
% \bibliography{biblio}
{\small

}

\end{document}